 \newtheorem{thm}{Theorem}
 \theoremstyle{definition}
 \theoremstyle{remark}
 \numberwithin{equation}{section}
 \newcommand{\Real}{\mathbb{R}}
  \newcommand{\gl}{\mathfrak{gl}}
\newcommand{\GL}{\text{GL}}
\begin{document}
\title[Matrix Factorizations]{Matrix Factorizations via the Inverse Function Theorem}

\author{Paul W.Y. Lee}
\email{wylee@math.cuhk.edu.hk}
\address{Room 216, Lady Shaw Building, The Chinese University of Hong Kong, Shatin, Hong Kong}
\keywords{QR factorization, Cholesky's factorization, LDU factorization, Inverse function theorem}
\subjclass{15A23}
\date{\today}

\maketitle

\begin{abstract}
We give proofs of QR factorization, Cholesky's factorization, and LDU factorization using the inverse function theorem. As a consequence, we obtain analytic dependence of these matrix factorizations which does not follow immediately using Gaussian elimination.
\end{abstract}

\section{Introduction}

In this note, we give proofs, using the inverse function theorem, some classical results in linear algebra. This includes QR factorization, Cholesky's factorization, and LDU factorization. Let us take QR factorization as an example. In this case, we define a map 
\[
F(Q,R)=QR
\]
where $Q$ ranges over all orthogonal matrices and $R$ ranges over all upper triangular matrices with non-negative diagonal entries. 

We first show that the derivative of $F$ at a point $(Q,R)$ is an isomorphism whenever $R$ is invertible. Therefore, the inverse function theorem applies at those points. Then it is not hard to show that every invertible matrix has a unique QR factorization. A limiting argument shows any square matrix has a QR factorization. Since the map $F$ is analytic, it also shows that the factorization depends analytically on the matrices (see Theorem \ref{QR} for the precise statement). All the proofs in this note follow this pattern.

We hope that this note can shed new light on finding new useful matrix factorizations and on the generalizations of the above matrix factorizations to infinite dimensions.

\smallskip

\section{QR factorization}

A QR factorization of a real square matrix $A$ is a decomposition of $A$ into the product of an orthogonal matrix and an upper triangular matrix. More precisely, let $\mathcal Q$ be the set of all $n\times n$ orthogonal matrices and let $\mathcal R^+$ be the sets of all $n\times n$ upper triangular matrices with positive diagonal entries, respectively. Let $F:\mathcal Q\times\bar{\mathcal R}^+\to \gl(n,\Real)$ be the map defined by
\[
F(Q,R)=QR,
\]
where $\bar{\mathcal R}^+$ denotes the closure of $\mathcal R^+$ and $\gl(n,\Real)$ denotes the set of all $n\times n$ matrices. Of course, the fact that every real square matrix has a QR factorization means the map $F$ is surjective.

\begin{thm}[QR factorization]\label{QR}
The map $F$ satisfies the followings.
\begin{enumerate}
\item The map $F$ is proper (i.e. inverse image of compact sets are compact).
\item The restriction of $F$ to the set $\mathcal Q\times \mathcal R^+$ is an analytic diffeomorphism onto its image $F(\mathcal Q\times \mathcal R^+)=\GL(n,\Real)$, the general linear group.
\item All $n\times n$ matrices $A$ can be written as a product of an orthogonal matrix and an upper triangular matrix. This decomposition is unique if $A$ is invertible and all the diagonal entries of the upper triangular matrix are positive.
\end{enumerate}
\end{thm}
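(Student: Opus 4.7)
The plan is to prove the three parts in order, with the analytic heart of the argument being the application of the inverse function theorem in part~(2), and with parts~(1) and~(3) serving mostly as topological support.

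For part~(1), properness follows from a short compactness argument. The group $\mathcal{Q}$ is compact in $\gl(n,\Real)$, so if $F(Q_k,R_k)=Q_kR_k$ lies in a compact set, I can pass to a subsequence with $Q_k\to Q\in\mathcal{Q}$, and then $R_k=Q_k^T(Q_kR_k)$ converges as well, with limit automatically in the closed set $\bar{\mathcal{R}}^+$.

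For part~(2), I would first compute the derivative of $F$ at a point $(Q,R)\in\mathcal{Q}\times\mathcal{R}^+$. Tangent vectors to $\mathcal{Q}$ at $Q$ are of the form $QX$ with $X$ skew-symmetric, and tangent vectors to $\mathcal{R}^+$ are arbitrary upper triangular matrices $Y$, so $dF_{(Q,R)}(QX,Y)=Q(XR+Y)$. Since $Q$ is invertible and the domain and codomain both have dimension $n^2$, bijectivity of the derivative reduces to injectivity of the linear map $\Psi(X,Y)=XR+Y$. If $XR+Y=0$, then $X=-YR^{-1}$; since $R$ is upper triangular and invertible, $R^{-1}$ is upper triangular, hence $X$ is the product of two upper triangular matrices and therefore upper triangular. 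Combined with skew-symmetry this forces $X=0$ and $Y=0$, so $\Psi$ is an isomorphism. By the inverse function theorem, $F$ restricted to $\mathcal{Q}\times\mathcal{R}^+$ is a local analytic diffeomorphism onto an open subset of $\GL(n,\Real)$. Global injectivity is the classical observation that $Q_1R_1=Q_2R_2$ yields $Q_2^TQ_1=R_2R_1^{-1}$, which is both orthogonal and upper triangular with positive diagonal, and hence is the identity. The image in $\GL(n,\Real)$ is open by the local diffeomorphism property, and closed by part~(1): if $Q_kR_k\to A\in\GL(n,\Real)$, extract a limit $(Q,R)\in\mathcal{Q}\times\bar{\mathcal{R}}^+$ with $QR=A$, and note that invertibility of $A$ forces $\det R\neq 0$, so $R\in\mathcal{R}^+$. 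Since both components of $\GL(n,\Real)$ are hit by trivial examples (e.g.\ $Q=\pm\,\mathrm{diag}(1,\ldots,1)$, $R=I$), the image is all of $\GL(n,\Real)$.

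Part~(3) then follows by a density argument. Given an arbitrary $A\in\gl(n,\Real)$, approximate it by invertible matrices $A_k\to A$, factor each as $A_k=Q_kR_k$ using part~(2), and apply properness from part~(1) to extract a subsequential limit $(Q,R)\in\mathcal{Q}\times\bar{\mathcal{R}}^+$ satisfying $QR=A$; uniqueness for invertible $A$ is already contained in part~(2). The main obstacle is injectivity of the derivative $\Psi$, but the one-line observation that $X=-YR^{-1}$ lies in the upper triangular matrices collapses it; a minor subtlety to watch is ensuring that ``closed image'' is verified inside $\GL(n,\Real)$ rather than inside $\gl(n,\Real)$, which is exactly where the positivity of the diagonal of $R$ gets used.
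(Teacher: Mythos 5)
Your proposal is correct and follows essentially the same route as the paper: compute $DF$ on tangent vectors, use the skew-symmetric/upper-triangular dichotomy to get injectivity of the derivative, invoke the inverse function theorem, prove global injectivity via $Q_2^TQ_1=R_2R_1^{-1}$, and use properness to show the image is both open and closed in $\GL(n,\Real)$. The only cosmetic differences are that you parametrize the tangent space to $\mathcal{Q}$ as $QX$ with $X$ skew (the paper writes $U$ with $Q^TU$ skew) and you prove properness directly by subsequence extraction rather than by showing the norm of $F$ escapes to infinity; both are equivalent.
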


\begin{proof}
First, we prove that $F$ is proper. For this, let $(Q_i,R_i)$ be a sequence in $\mathcal Q\times\bar{\mathcal R}^+$ such that $||(Q_i,R_i)||\to\infty$ as $i\to\infty$. Here and throughout this paper, we use the Hilbert-Schmidt norm. It is enough to show $||F(Q_i,R_i)||\to\infty$ as $i\to\infty$.
Since $\mathcal Q$ is compact, we have $||R_i||\to\infty$. It follows that $||F(Q_i,R_i)||=||Q_iR_i||=||R_i||\to\infty$ as $i\to\infty$. This proves (1).

Let $Q$ be an orthogonal matrix and let $R$ be an upper triangular matrix with positive diagonal entries. The derivative $DF(Q,R)(U,V)$ of the function $F$ at the point $(Q,R)$ is given by
\[
DF(Q,R)(U, V)=U R+QV.
\]
Note that since $V$ is tangent to $\mathcal R^+$, it is upper triangular. Since $U$ is tangent to $\mathcal Q$ at the point $Q$, we also have $U^TQ+Q^TU=0$. In other words, $Q^TU$ is skew symmetric. Therefore,
$U R+QV=0$ if and only if
\[
Q^TU=-VR^{-1}.
\]
The left hand side of the above equation is skew symmetric while the right hand side is upper triangular. Therefore, we must have $U=V=0$ and $DF(Q,R)$ is an isomorphism. It follows from a dimension count and the inverse function theorem that $F$ restricted to $\mathcal Q\times\mathcal R^+$ is an analytic local diffeomorphism.

Let $(Q',R')$ be another point in the space $\mathcal Q\times\mathcal R^+$ such that $F(Q,R)=F(Q',R')$. Then
\[
Q^TQ'=R(R')^{-1}.
\]
The left hand side of the above equation is orthogonal and the right hand side is upper triangular with positive diagonal entries. It follows that $Q^TQ'$ is an identity matrix. Therefore, $Q=Q'$ and $R=R'$. This shows that the restriction of $F$ to $\mathcal Q\times\mathcal R^+$ is injective and hence an analytic diffeomorphism onto its image. Therefore, it remains to show that $F(\mathcal Q\times \mathcal R^+)=\GL(n,\Real)$. For this, note that $\GL(n,\Real)$ contains two connected components one containing the identity $I$ and the other containing $-I$. Since both $I$ and $-I$ are clearly contained in the image $F(\mathcal Q\times \mathcal R^+)$, it is enough to show that there is no boundary point of $F(\mathcal Q\times \mathcal R^+)$ contained in $\GL(n,\Real)$.

Let $A$ be a point which is in the boundary of $F(\mathcal Q\times \mathcal R^+)$. Then there is a sequence $(Q_i,R_i)$ such that $F(Q_i,R_i)$ converges to $A$ as $i\to\infty$. Since $F$ is proper, a subsequence of $(Q_i, R_i)$ converges to a limit $(Q_\infty,R_\infty)$ which is a QR factorization of $A$. Therefore, if $A$ is in $\GL(n,\Real)$, then $R_\infty$ is invertible. Hence, $(Q_\infty, R_\infty)$ is contained in $\mathcal Q\times \mathcal R^+$. This gives a contradiction since $A$ is a boundary point of the open set $F(\mathcal Q\times \mathcal R^+)$. This shows $F(\mathcal Q\times \mathcal R^+)=\GL(n,\Real)$ and hence finishes the proof of (2). The above argument also shows that any matrix $A$ has a QR factorization proving (3).

\end{proof}

\smallskip

\section{Cholesky's factorization}

Every non-negative definite symmetric matrix $A$ has a decomposition of the form $A=LL^T$, where $L$ is a lower triangular matrix. This is the, so called, Cholesky decomposition. Let $\mathcal L^+$ be the set of all $n\times n$ lower triangular matrices with positive diagonal entries and let $\mathcal S^+$ be the set of all $n\times n$ positive definite symmetric matrices. Let $F:\bar{\mathcal L}^+\to\bar{\mathcal S}^+$ be the map defined by $F(L)=LL^T$.

\begin{thm}[The Cholesky factorization]
The map $F$ satisfies the followings.
\begin{enumerate}
\item The map $F$ is proper.
\item The restriction of $F$ to the set $\mathcal L^+$ is an analytic diffeomorphism onto its image $F(\mathcal L^+)=\mathcal S^+$.
\item Any non-negative symmetric matrix $A$ can be written as $LL^T$, where $L$ is lower triangular with non-negative diagonal entries. Moreover, such decomposition is unique if $A$ is positive definite.
\end{enumerate}
\end{thm}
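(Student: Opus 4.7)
The plan is to follow the same template as the QR proof. For properness, I would use the bound $\|F(L)\|^2 = \tr((LL^T)^2) \geq \frac{1}{n}(\tr(LL^T))^2 = \frac{1}{n}\|L\|^4$ (Cauchy--Schwarz applied to the eigenvalues of the positive semidefinite matrix $LL^T$), which forces $\|F(L_i)\| \to \infty$ whenever $\|L_i\| \to \infty$ in $\bar{\mathcal L}^+$. This gives (1).

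Next, at $L \in \mathcal L^+$ I would compute $DF(L)(U) = UL^T + LU^T$ for a lower triangular tangent vector $U$. Since source and target both have dimension $n(n+1)/2$, showing $DF(L)$ is an isomorphism reduces to showing its injectivity. The key manipulation is to conjugate $UL^T + LU^T = 0$ by $L$ (multiplying on the left by $L^{-1}$ and on the right by $L^{-T}$), which gives $L^{-1}U + (L^{-1}U)^T = 0$. Thus $L^{-1}U$ is skew-symmetric; but it is also lower triangular, being a product of two lower triangular matrices, and any matrix which is both must vanish. Hence $U = 0$, and the inverse function theorem yields that $F|_{\mathcal L^+}$ is an analytic local diffeomorphism. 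For global injectivity, if $LL^T = L'(L')^T$ with $L, L' \in \mathcal L^+$, then $(L')^{-1}L = (L')^T L^{-T}$ is simultaneously lower and upper triangular, hence diagonal, say $D$; substituting $L = L'D$ back gives $D^2 = I$, and positivity of the diagonals forces $D = I$, so $L = L'$.

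The image $F(\mathcal L^+)$ is contained in $\mathcal S^+$ automatically and is open in $\mathcal S^+$ since $F|_{\mathcal L^+}$ is a diffeomorphism onto its image. As in the QR argument, connectedness of $\mathcal S^+$ together with properness rules out any boundary point of $F(\mathcal L^+)$ inside $\mathcal S^+$: such a point $A$ would equal $F(L_\infty)$ for some $L_\infty \in \bar{\mathcal L}^+$ by properness, but $\det A > 0$ would force $L_\infty$ invertible and hence in $\mathcal L^+$, a contradiction. This establishes (2). Finally, (3) follows by approximation: any positive semidefinite $A$ is a limit of positive definite matrices $A_k$, each with a Cholesky factor $L_k \in \mathcal L^+$ by (2), and properness extracts a subsequential limit $L_\infty \in \bar{\mathcal L}^+$ with $F(L_\infty) = A$; uniqueness for positive definite $A$ is already contained in (2). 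I expect the main obstacle to be the injectivity of the derivative, for which the symmetric conjugation by $L$ described above is the cleanest route — trying to read off triangularity from $UL^T$ or $LU^T$ directly is unhelpful, since each is a product of a lower and an upper triangular factor and thus has no obvious triangular structure.
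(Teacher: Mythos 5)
Your proof is correct and follows essentially the same route as the paper: properness via the trace inequality $\tr((LL^T)^2) \geq \tfrac{1}{n}(\tr(LL^T))^2$, injectivity of $DF(L)$ by conjugating with $L^{-1}$ and $L^{-T}$, global injectivity by comparing upper and lower triangularity of $(L')^{-1}L$, and then the same properness-plus-connectedness argument for surjectivity and the limiting argument for the semidefinite case. The one small refinement is in the derivative step: where the paper concludes that $L^{-1}V$ must be diagonal and then substitutes back to force $D=0$, you note directly that $L^{-1}U$ is simultaneously skew-symmetric and lower triangular and hence zero, which finishes the same conjugation a touch more cleanly.
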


\begin{proof}
The proof is similar to that of Theorem \ref{QR}. For (1), let $L_i$ be a sequence of lower triangular matrix such that $||L_i||\to\infty$ as $i\to\infty$. It follows from H\"older's inequality that
\[
||F(L_i)||^2=||L_iL_i^T||^2\geq \frac{1}{n}(\text{tr}(L_iL_i^T))^2\to\infty
\]
as $i\to\infty$. Therefore, (1) follows.

For (2), we look at the derivative $DF$ of $F$ at a point $L$ in the space $\mathcal L^+$. It is given by
\[
DF(L)(V)=LV^T+VL^T.
\]
Since $V$ is tangent to the space $\mathcal L^+$, it is lower triangular.
If $LV^T+VL^T=0$ holds, then
\[
V^T(L^T)^{-1}=-L^{-1}V.
\]
The left side of the above equation is an upper triangular matrix while the right side is a lower triangular one. This implies that
\[
V=LD
\]
for some diagonal matrix $D$. By substituting this back, we obtain $D=0$ and hence $V=0$. This shows that $DF(L)$ is an isomorphism for each $L$ in $\mathcal L^+$. Therefore, the inverse function theorem applies and the restriction of $F$ to $\mathcal L^+$ is a local diffeomorphism.

Let $L'$ be a point in $\mathcal L^+$ such that $F(L)=F(L')$. Then
\[
L^T((L')^{-1})^T=L^{-1}L'.
\]
The left hand side of the above equation is upper triangular and the right side is lower triangular. It follows that
\[
L'=LD\text{\quad and \quad } L=L'D
\]
for some diagonal matrix $D$ with positive diagonal entries. Since $L$ is invertible, we have $D=I$ and so $L=L'$. Therefore, the restriction of $F$ to $\mathcal L^+$ is injective. Now (2) and (3) follow as in the proof of Theorem \ref{QR}.

\end{proof}

\smallskip

\section{LDU factorization}

In this section, we discuss the LDU factorization. It turns out that this case is slightly more complicated since the corresponding map here is not proper. For simplicity, we only consider the case when all the matrices are invertible.

Let $\mathcal L_1$ denotes the set of all $n\times n$ lower triangular matrices for which diagonal entries are all 1, $\mathcal U_1$ denotes the set of all $n\times n$ upper triangular matrices for which diagonal entries are all 1, and let $\mathcal D$ denotes the set of all invertible diagonal matrices. Let $F:\mathcal L_1\times\mathcal D\times\mathcal U_1\to \text{GL}(n,\Real)$ be the map defined by $F(L,D,U)=LDU$.

Finally, recall that the leading principal minors of a square matrix $A$ are matrices obtained by deleting the last $k$ rows and the last $k$ columns for some non-negative integer $k$. We denote by $\mathcal P$ the set of all matrices $A$ for which all leading principal minors of $A$ are invertible. It is known that an invertible matrix has a LDU factorization if and only if it is contained in $\mathcal P$ (see, for instance, \cite{HoJo}). Moreover, the factorization is unique. To see that the image  $F(\mathcal L_1\times\mathcal D\times\mathcal U_1)$ of $F$ is contained in $\mathcal P$, let $(L,D,U)$ be a point in the set $\mathcal L_1\times\mathcal D\times\mathcal U_1$. Let us split $L=\left(
                                      \begin{array}{cc}
                                        L^1 & O \\
                                        L^2 & L^3 \\
                                      \end{array}
                                    \right)$, $D=\left(
                                      \begin{array}{cc}
                                        D^1 & O \\
                                        O & D^4 \\
                                      \end{array}
                                    \right)$,  and $U=\left(
                                      \begin{array}{cc}
                                        U^1 & U^2 \\
                                        O & U^4 \\
                                      \end{array}
                                    \right)$,
where the upper left block of each matrix is of size $k\times k$. Then $LDU=\left(
                                      \begin{array}{cc}
                                        L^1D^1U^1 & L^1D^1U^2 \\
                                        L^2D^1U^1 & L^2D^1U^2+L^3D^4U^4 \\
                                      \end{array}
                                    \right)$. Therefore, $L^1D^1U^1$ is the $k\times k$ leading principal minor of $LDU$ and it is invertible if $D$ is. Hence, $LDU$ is contained in $\mathcal P$.

\begin{thm}[LDU factorization]
The followings hold.
\begin{enumerate}
\item The map $F$ is an analytic diffeomorphism between $\mathcal L_1\times\mathcal D\times\mathcal U_1$ and $\mathcal P$.
\item Every matrix in $\mathcal P$ has a unique LDU factorization.
\end{enumerate}
\end{thm}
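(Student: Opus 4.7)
The plan is to follow the template of Theorem \ref{QR} and the Cholesky theorem, modifying the last step to account for the author's observation that $F$ is not proper as a map into $\GL(n,\Real)$. First I would compute the derivative
\[
DF(L,D,U)(V_L,V_D,V_U) = V_L D U + L V_D U + L D V_U.
\]
The tangent space to $\mathcal L_1$ at $L$ consists of strictly lower triangular matrices (the diagonal entries of elements of $\mathcal L_1$ being pinned at $1$), and similarly for $\mathcal U_1$, while the tangent space to $\mathcal D$ is all diagonal matrices. Setting $DF = 0$ and multiplying on the left by $L^{-1}$ and on the right by $U^{-1}$ gives
\[
L^{-1} V_L D + V_D + D V_U U^{-1} = 0.
\]
Under the direct sum decomposition of $\gl(n,\Real)$ into strictly lower triangular, diagonal, and strictly upper triangular matrices, the three summands lie in the three distinct components, so each vanishes individually; invertibility of $D$ then forces $V_L = V_D = V_U = 0$. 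A dimension count ($\tfrac{n(n-1)}{2} + n + \tfrac{n(n-1)}{2} = n^2$) together with the inverse function theorem gives that $F$ is an analytic local diffeomorphism.

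For global injectivity of $F$, if $F(L,D,U) = F(L',D',U')$ I would rearrange to $(L')^{-1}L\cdot D = D'\cdot U'U^{-1}$. The left side is lower triangular with diagonal $D$ while the right side is upper triangular with diagonal $D'$, so both coincide with a common diagonal matrix; this forces $D = D'$, $(L')^{-1}L = I$, and $U'U^{-1} = I$. Combined with the local-diffeomorphism statement, $F$ is then an analytic diffeomorphism onto its image, which is open in $\GL(n,\Real)$ and contained in $\mathcal P$ by the block computation recorded in the preamble.

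The hard part will be showing that this image equals all of $\mathcal P$. Since $F$ is not proper into $\GL(n,\Real)$, the boundary argument of Theorem \ref{QR} does not apply directly. My plan is to show instead that $F$ is proper when viewed as a map into $\mathcal P$ itself. Let $F(L_i,D_i,U_i) = A_i \to A \in \mathcal P$. The block decomposition in the preamble identifies the $k\times k$ leading principal minor of $A_i$ with $L_i^1 D_i^1 U_i^1$, whose determinant equals $(D_i)_{11}\cdots(D_i)_{kk}$; nonvanishing of the limiting principal minors therefore forces $(D_i)_{kk} = \det A_i^{(k)}/\det A_i^{(k-1)}$ to converge to a nonzero limit, so $D_i$ converges to an invertible $D$. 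An induction on $n$, peeling off the last row and column (and using that the leading $(n-1)\times(n-1)$ block of a matrix in $\mathcal P$ again lies in the corresponding $\mathcal P$), then shows that $L_i$ and $U_i$ also converge, so the image is closed in $\mathcal P$. Being both open and closed, the image is a union of connected components of $\mathcal P$; since $F(I,D,I) = D$ realizes every possible sign pattern of the leading principal minors as $D$ ranges over $\mathcal D$, the image meets every component of $\mathcal P$ and hence equals $\mathcal P$. Statement (2) follows immediately from (1).
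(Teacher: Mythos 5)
Your proof follows essentially the same route as the paper: compute $DF$, conjugate by $L^{-1}$ and $U^{-1}$ to split into strictly lower/diagonal/strictly upper parts, deduce injectivity of $DF$ and apply the inverse function theorem; establish global injectivity via $(L')^{-1}LD=D'U'U^{-1}$; and prove surjectivity by showing that a sequence in the preimage of a convergent sequence in $\mathcal P$ has convergent $D_i$ (ratios of leading principal minors) and bounded $L_i,U_i$ (inductively, via the block formula), so that the image is closed in $\mathcal P$. Your final paragraph is slightly more explicit than the paper's terse ``Therefore $F$ is surjective'': you observe that the image is open and closed in $\mathcal P$ and therefore a union of components, and that diagonal matrices realize every sign pattern of leading principal minors, which is a useful clarification (though note it tacitly assumes each sign-pattern fiber of $\mathcal P$ is connected, a point the paper also leaves implicit).
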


\begin{proof}
Let $(L,D,U)$ be a point in $\mathcal L_1\times\mathcal D\times\mathcal U_1$. If
\[
DF(L,D,U)(A,S,B)=ADU+LSU+LDB=0,
\]
then $L^{-1}AD+S+DBU^{-1}=0$. Since $L^{-1}AD$ is strictly lower triangular, $DBU^{-1}$ is strictly upper triangular, and $S$ is diagonal, we must have $L^{-1}AD=DBU^{-1}=S=0$. Since $D$ is invertible, we must have $A=B=0$. This shows that $F$ is a local diffeomorphism.

Suppose $F(L,D,U)=F(L',D',U')$. Then $(L')^{-1}LD=D'U'U^{-1}=:\bar D$ is diagonal. Since diagonals of both matrices $(L')^{-1}L$ and $U'U^{-1}$ are all 1's, we have $D=D'=\bar D$. It also follows from this and $(L')^{-1}L=I$ that $L=L'$. Similarly $U=U'$. Therefore, $F$ is injective.

Let $A$ be a point in $\mathcal P$ which is also contained in the boundary of $F(\mathcal L_1\times\mathcal D\times\mathcal U_1)$. Let $(L_i,D_i,U_i)$ be a sequence such that $F(L_i,D_i,U_i)$ converges to $A$ as $i\to\infty$. We want to show that $(L_i,D_i,U_i)$ stays bounded as $i\to\infty$. The determinants of the leading principal minors of $L_iD_iU_i$ are converging to those of $A$ which are nonzero. By the discussion at the beginning of the section, we know that the determinant of leading principal minors of $L_iD_iU_i$ are given by
\[
D_{11}^i, \,D_{11}^iD_{22}^i,...,\,D_{11}^i...D_{nn}^i,
\]
where $D_{jj}^i$ is the $j$-th diagonal entry of $D^i$. It follows that $D_i$ converges to a diagonal matrix with non-zero diagonal entries. Let $\left(
                                      \begin{array}{cc}
                                        L_i^1 & O \\
                                        L_i^3 & 1 \\
                                      \end{array}
                                    \right)$, $\left(
                                      \begin{array}{cc}
                                        D_i^1 & O \\
                                        O & D_i^4 \\
                                      \end{array}
                                    \right)$,  and $\left(
                                      \begin{array}{cc}
                                        U_i^1 & U_i^2 \\
                                        O & 1 \\
                                      \end{array}
                                    \right)$  be the upper left $k\times k$ blocks of $L_i$, $D_i$, and $U_i$, respectively. Here $L_i^1$, $D_i^1$, and $U_i^1$ are of size $(k-1)\times(k-1)$. Then
\[
L_iD_iU_i=\left(
                                      \begin{array}{cc}
                                        L_i^1D_i^1U_i^1 & L_i^1D_i^1U_i^2 \\
                                        L_i^3D_i^1U_i^1 & L_i^3D_i^1U_i^2+D_i^4 \\
                                      \end{array}
                                    \right).
\]
By induction, we can assume that $L_i^1$ and $U_i^1$ are bounded independent of $i$. On the other hand, $L_i^1D_i^1U_i^2$ and $L_i^3D_i^1U_i^1$ are converging to the corresponding blocks of $A$. Since $L_i^1D_i^1$ and $D_i^1U_i^1$ are invertible, $L_i^3$ and $U_i^2$ are bounded independent of $i$ as well. By induction, this shows that $L$ and $U$ are bounded. Therefore, we can extract a convergence subsequence of $(L_i,D_i,U_i)$ which converges to a point $(L_\infty,D_\infty,U_\infty)$ satisfying $A=L_\infty D_\infty U_\infty$. Since $A$ is invertible, so is $D_\infty$. Therefore, $(L_\infty,D_\infty,U_\infty)$ is contained in $\mathcal L_1\times\mathcal D\times\mathcal U_1$ which is a contradiction. Therefore $F$ is surjective. This finishes the proof of (1). (2) follows immediately from (1).
\end{proof}

\smallskip

\end{document}